\newcounter{minutes}
\newcounter{hours}
\dedicatory{}
\theoremstyle{plain}
\newtheorem{theorem}[equation]{Theorem}
\newtheorem{corollary}[equation]{Corollary}
\theoremstyle{definition}
\theoremstyle{remark}
\newtheorem{remark}[equation]{Remark}
\numberwithin{equation}{section}
\newcommand{\beq}{\begin{equation}}
\newcommand{\eeq}{\end{equation}}
\newcommand{\ben}{\begin{enumerate}}
\newcommand{\een}{\end{enumerate}}
\newcommand{\bequu}{\begin{eqnarray*}}
\newcommand{\eequu}{\end{eqnarray*}}
\newcommand{\bequ}{\begin{eqnarray}}
\newcommand{\eequ}{\end{eqnarray}}
\renewcommand{\Re}{{ \rm Re}\,}
\begin{document}
\thispagestyle{empty}
\def\thefootnote{}



\title[]{Convolution features of univalent meromorphic functions generated by Barnes-Mittag-Leffler function}

\author[T. Yavuz]{Tuğba Yavuz}
\address{Department of Mathematics,
	Faculty of Arts and Science,
	Istanbul Beykent University, \newline 34500, Istanbul, T\"{u}rkiye \\
	}
\email{tugbayavuz@beykent.edu.tr}

\author[\c{S}. Alt\i nkaya]{\c{S}ahsene Alt\i nkaya}
\address{Department of Mathematics and Statistics,
	University of Turku,\newline FI-20014 Turku, 
	Finland\\
	\url{https://orcid.org/0000-0002-7950-8450}}
\email{sahsenealtinkaya@gmail.com}

%

\date{}

\begin{abstract}
The Mittag-Leffler function plays an important role in Geometric Function Theory, particularly in the study of analytic and meromorphic function classes. Among its various generalizations, the Barnes-Mittag-Leffler function stands out due to its intricate structure and applications in diverse mathematical fields. In this paper, our main focus is to investigate the convolution properties of these functions and establish conditions that ensure specific geometric characteristics. Additionally, we explore membership relations for functions in these classes. The results obtained in this work are novel, and their significance is demonstrated through various illustrative consequences and corollaries, emphasizing their potential impact in function theory and its applications.

\end{abstract}

\keywords{Barnes-Mittag-Leffler function, Meromorphic spirallike functions, Starlike function, Convex function, Convolution.}
\subjclass[2010]{Primary 30C45; Secondary 30C50; 05A30.}


\maketitle


\footnotetext{\texttt{{\tiny File:~\jobname .tex, printed: \number\year-%
\number\month-\number\day, \thehours.\ifnum\theminutes<10{0}\fi\theminutes}}}
\makeatletter

\makeatother



	\section*{1. Introduction}

The Mittag-Leffler function is a significant and versatile special function that has found widespread applications across various branches of science and engineering. It plays a crucial role in mathematical physics, fractional calculus, and approximation theory, among other fields. This function frequently arises in the context of differential equations of fractional order, providing essential tools for the study and modeling of complex systems. Its importance extends to a broad range of scientific disciplines, making it an indispensable function for researchers and practitioners working on the frontiers of theoretical and applied mathematics.

The Mittag-Leffler function $E_{K }(z)$ $(K, z \in {\mathbb{C}},\ \Re(K )>0)$ is expressed by \cite{mB}
\begin{equation*}
	E_{K}(z)=\sum_{n=0}^{\infty }\frac{z^{n}}{\Gamma (Kn+1)}
\end{equation*}%
and its generalization with two parameters $E_{K ,\vartheta }(z)$ $(K, \vartheta,z\in {\mathbb{C}},\ \Re (K)>0,\Re (\vartheta )>0)$ is expressed by Wiman \cite{Wiman} 
\begin{equation*}
	E_{K,\vartheta }(z)=\sum_{n=0}^{\infty }\frac{z^{n}}{\Gamma (K
		n+\vartheta )},
\end{equation*}%
where $\Gamma $ stands for the Gamma function. 

The Mittag-Leffler function with four parameters was introduced by Shukla and Prajapati \cite{sh} in 2007. Defining Mittag-Lefler function with two variables, Garg et al. \cite{ga} introduced the integral representation and Laplace transform of these kind of functions in 2013.  Dorrego et al. \cite{d} obtained many interesting properties of $k$-Mittag-Leffler function. In addition to introducing the $k$-Mittag-Leffler function of two variables, Kamarujjama et al. \cite{kam} also extended fractional calculus operations that involve this function. By defining a new function, Bansal and Mehrez \cite{ba} aim to examine the Wright function and the Mittag-Leffer function simultaneously. It is also possible to see various properties of the Wright functions and its applications in \cite{st}. In 2018, Gehlot \cite{geh} defined one generalization of $k$-Mittag-Leffler function such as $p-k$ Mittag-Leffler function and ivestigated certain properties of these kind of functions. More detailed information about Mittag-Leffler function and its generalized versions can be found in the references \cite{gor,hum,hum1,lu2}. Among its various generalizations, the Barnes-Mittag-Leffler function stands out. The Barnes-Mittag-Leffler function, intricately linked to the Mittag-Leffler function, is a cornerstone in fractional calculus, approximation theory, and diverse fields of science and engineering. These functions are indispensable, driving significant advancements and innovations in both theoretical research and practical applications. Their impact spans from solving complex differential equations to enhancing the precision and efficiency of engineering systems, marking them as fundamental tools in modern scientific and technological progress.

Let $ z, s \in \mathbb{C} $, $ a, \vartheta \in \mathbb{C} \setminus \mathbb{Z}_0^- $, and $ \Re(K) > 0 $. One of the most important generalizations is the Barnes-Mittag-Leffler function defined by
\begin{equation*}
	E_{K,\vartheta}^a (z;s) = \sum_{n=0}^{\infty} \frac{z^n}{\Gamma(Kn+\vartheta)(n+a)^s}
\end{equation*}
as introduced by Barnes in 1906 \cite{barn}. The relationships between these functions are expressed by the equalities
\begin{equation*}
	E_{K,\vartheta}^a (z;0) = E_{K,\vartheta} (z), \ \ E_{K,1} (z) = E_K (z), \ \  E_1 (z) = e^z.
\end{equation*}

Despite its importance, the application of the Mittag-Leffler function in Geometric Function Theory (GFT) began only in 2016. In this context, the function was normalized by Bansal and Prajapat, who examined its geometric properties such as univalence, starlikeness, and convexity in the unit disc  \cite{ban}. Subsequently, classes of univalent functions involving the Mittag-Leffler function were defined \cite{ayy,sa}. For these classes, problems such as subordination and convolution conditions, as well as coefficient estimates, are investigated. In a recent staudy, Abubaker et al. \cite{abu} introduced a new fractional integral operator involving the four-parameter Mittag–Leffler function and explores its applications in GFT. Using tools from fractional calculus and differential subordination theory, the authors established univalence, starlikeness, convexity, and close-to-convexity conditions for the operator. The results highlighted the operator’s potential as a useful tool in advancing the study of GFT, as well as in subordination and superordination problems.

Alenazi and Mehrez \cite{ale} establish sufficient conditions on the parameters of a class of functions associated with the Barnes-Mittag-Leffler function, enabling us to demonstrate that these functions exhibit specific geometric properties within the unit disc. These properties include starlikeness, uniform starlikeness, strong starlikeness, convexity, and close-to-convexity. However, research focusing specifically on the complete monotonicity, convexity, and inequalities for the Barnes-Mittag-Leffler function remains limited. In a recent study, Mehrez et al. \cite{me} establish new results related to these properties, including Turán-type inequalities and several other novel inequalities. These contributions not only enhance the understanding of the Barnes-Mittag-Leffler function but also offer useful insights for further research in this area.

We introduce and investigate new subfamilies of meromorphic functions, specifically focusing on a new operator associated with the Barnes-Mittag-Leffler function. To the best of our knowledge, there has been no prior study in the literature concerning the normalization of this operator. Our motivation stems from the need to develop new function families that extend existing results in GFT while maintaining connections with well-established mathematical structures.

This work significantly advances mathematical research in both theory and practice. By defining and analyzing the operator $\mathcal{B}_{K,\vartheta}^a f$ and constructing novel subfamilies $S^{\lambda}_{K, \vartheta} (  \Theta)$ and $K^{\lambda}_{K,\vartheta} (  \Theta)$, we seek to establish conditions for these functions to exhibit geometric properties such as spirallikeness, convexity within the unit disc. 

\section*{2. Preliminaries}
This section begins with a brief overview of the key concepts and symbols of GFT. We then proceed to construct and investigate the new subfamilies $S^{\lambda}_{K, \vartheta} ( \Theta)$ and $K^{\lambda}_{K,\vartheta} ( \Theta)$, which enrich the existing theory.

A complex-valued function $f$ of a complex variable is called \textit{univalent} in a domain $D$ if it does not take the same value twice. This means that for $z_1, z_2 \in D$, 
\begin{eqnarray*}
	f(z_1) \neq f(z_2) \quad \text{for } z_1 \neq z_2.
\end{eqnarray*}
We shall be concerned primarily with the class $S$ of functions $f$ analytic and univalent in the unit disc $\mathbb{B}=\left\{ z\in \mathbb{C}:  \left\vert z\right\vert <1 \right\}$, normalized by the conditions $f(0)=0$ and $f^\prime(0)=1$. Thus each $f \in S$ has a Taylor expansion of the form 
\begin{eqnarray*}
	f(z)=z+\sum_{n=2}^{\infty }\mathbf{a}_{n}z^{n}.
\end{eqnarray*}
The study of univalent functions has been around for more than a century, but it is still a very active topic \cite{dur}.

Let $\Sigma$ is the family of meromorphic functions
\begin{equation}
	f(z)=\frac{1}{z}+\sum_{n=1}^{\infty }\mathbf{a}_{n}z^{n-1},  \label{1}
\end{equation}
which are analytic in the punctured unit disc $\mathbb{B^*}=\left\{ z\in \mathbb{C}: 0 < \left\vert z\right\vert <1 \right\}= \mathbb{B}\setminus\left\lbrace 0\right\rbrace $. For $ 0 \leq \alpha < 1 $, we recall that the families of meromorphic starlike and meromorphic convex functions of order $\alpha$, denoted by $  \mathcal{M}_S $, $  \mathcal{M}_K $, respectively, are defined by \cite{ha}
\begin{equation*}
	\mathcal{M}_S= \left\lbrace  f \in \Sigma:  \Re \left( \frac{zf^\prime(z)}{f(z)} \right) < -\alpha\right\rbrace ,
\end{equation*}
\begin{equation*}
	\mathcal{M}_K= \left\lbrace  f \in \Sigma : \Re \left( 1 + \frac{zf^\prime(z)}{f(z)} \right) < -\alpha \right\rbrace.
\end{equation*}
Next, we say that a function $f$ is meromorphic $\lambda$-spirallike of order $ \alpha$ and meromorphic $\lambda$-convex spirallike of order $ \alpha$ if  
\begin{equation*}
	\Re \left( e^{-i\lambda} \frac{z f^\prime(z)}{f(z)} \right)< -\alpha \cos \lambda
\end{equation*}
and
\begin{equation*}
	\Re \left( e^{-i\lambda} \frac{(z f^\prime(z))^\prime}{f^\prime(z)} \right)< -\alpha \cos \lambda
\end{equation*}
for $\lambda$ which is real with $ -\frac{\pi}{2} < \lambda < \frac{\pi}{2} $.  
We denote these classes by $ \mathcal{M}_S(\lambda) $ and $ \mathcal{M}_K(\lambda) $, respectively \cite{shi}.  By setting  $\lambda=0$, we get the subclasses $ \mathcal{M}_S$, $ \mathcal{M}_K$, respectively. See, for instance, the prior publications \cite{muh,shi2} and the references listed therein for some recent studies on meromorphic spirallike functions and related subjects.

The Hadamard (or convolution) product of functions $f(z)$ and $g(z)=\frac{1}{z}+\sum_{n=2}^{\infty }\mathbf{b}_{n}z^{n}$ is
expressed by 
\begin{equation*}
	f(z)\ast g(z)=(f\ast g)(z)=\frac{1}{z}+\sum_{n=1}^{\infty
	}\mathbf{a}_{n}\mathbf{b}_{n}z^{n}.
\end{equation*}
Now, if $f$ and $g$ are analytic in $\mathbb{B}$, then we call that $f$ is subordinated to $g$, showed by $f\prec g$, for the Schwarz
function 
\begin{equation*}
	\mathbf{\varpi }(z)=\sum_{n=1}^{\infty }\mathbf{c}_{n}z^{n}\ \ \left( 
	\mathbf{\varpi }\left( 0\right) =0,\left\vert \mathbf{\varpi }\left(
	z\right) \right\vert <1\right) ,
\end{equation*}%
analytic in $\mathbb{B}$ such that 
\begin{equation*}
	f\left( z\right) =g\left( \mathbf{\varpi }\left( z\right) \right) \
	\ \ \ \left( z\in \mathbb{B} \right) .
\end{equation*}

The Barnes-Mittag-Leffler function $E_{K,\vartheta}^a (z;s)$, since it does not belong to the class $\Sigma$, is normalized as follows:
\begin{equation*}
	\mathcal{E}_{K,\vartheta}^a (z;s) = a^s \Gamma(\vartheta) z^{-1} E_{K,\vartheta}^a (z;s) = \frac{1}{z} + \sum_{n=1}^{\infty} h_n^a (K,\vartheta,s) z^{n-1}.
\end{equation*}
Here,
\begin{equation}
	h_n^a (K,\vartheta,s) = \frac{a^s \Gamma(\vartheta)}{\Gamma(Kn+\vartheta-K)(n+a-1)^s}. \label{2}
\end{equation}
Thus, for the function $ f \in \Sigma $, the operator $ \mathcal{B}_{K,\vartheta}^a : \Sigma \rightarrow \Sigma $ is given by
\begin{equation*}
	\mathcal{B}_{K,\vartheta}^a f(z) = \mathcal{E}_{K,\vartheta}^a (z;s) \ast f(z) = \frac{1}{z} + \sum_{n=1}^{\infty}  h_n^a (K,\vartheta,s) \mathbf{a}_n z^{n-1}.
\end{equation*}
In this study, the parameters $ a, K, s, \vartheta $ are considered to be positive real values, and $z \in \mathbb{B} $ is examined.

Now, inspired by the groundbreaking findings in recent studies, as highlighted in \cite{bul,bre,mos,pon,rav}, and leveraging the concepts of subordination, we introduce the new subfamilies $S^{\lambda}_{K, \vartheta} (  \Theta)$ and $K^{\lambda}_{K, \vartheta} (  \Theta)$ of meromorphic functions:
Let $\left\vert \lambda \right\vert <\frac{\pi }{2}$ and $\Theta (z)$ be an analytic function in $\mathbb{B}$. Then
\begin{equation} \label{12}
	S^{\lambda}_{K, \vartheta} ( \Theta)= \left\lbrace f \in \Sigma : \dfrac{z\left(\mathcal{B}_{K,\vartheta}^a f(z) \right)^\prime}{ \mathcal{B}_{K,\vartheta}^a f(z)} \prec -e^{-i\lambda}  \left( \cos \lambda \Theta \left( z\right)+ i \sin \lambda \right) \right\rbrace 
\end{equation}
and
\begin{equation} \label{13}
	K^{\lambda}_{K, \vartheta} (  \Theta)= \left\lbrace f \in \Sigma :   \left( 1+ \dfrac{z\left(\mathcal{B}_{K,\vartheta}^a f(z) \right)^{\prime\prime}}{ \left( \mathcal{B}_{K,\vartheta}^a f(z)\right) ^ \prime} \right)  \prec -e^{-i\lambda}  \left( \cos \lambda \Theta \left( z\right)+ i \sin \lambda \right) \right\rbrace .
\end{equation}

We know that there is a relation (Alexander equivalence) between the classes $ S^{\lambda}_{K, \vartheta} (  \Theta) $ and $K^{\lambda}_{K, \vartheta} (  \Theta)$ such as 

\begin{equation} \label{144}
	f\in K^{\lambda}_{K, \vartheta} (  \Theta) \iff -zf'\in S^{\lambda}_{K, \vartheta} (  \Theta).    
\end{equation} 

Consider the class of spirallike meromorphic functions of order $\alpha ,$ $ \mathcal{M}_{S}(\lambda ),$ where $%
0\leq \alpha <1,$  The mapping
\begin{equation*}
	g(z)\mapsto f(z):=\frac{1}{g(z)}
\end{equation*}
forms one-to-one correspondence between functions in
the classes $S_{p}^{\lambda }(\alpha )$ (Libera type spirallike analytic
functions of order $\alpha $) and $ \mathcal{M}_{S}(\lambda ).$ Note that, $S_{p}^{\lambda }(\alpha )$ denotes the class of spirallike functions satisfying $\Re \left( e^{i\lambda }\frac{zf^\prime(z)}{f(z)} \right) > \alpha\cos\lambda$, where $ -\frac{\pi}{2} < \lambda < \frac{\pi}{2} $ and $0\leq\alpha<1$. Hence, the following relation is easily obtained as 
\begin{equation*}
	-e^{-i\lambda }\frac{zf^{\prime }(z)}{f(z)}=-e^{-i\lambda }\frac{z\left( 
		\frac{1}{g(z)}\right) ^{\prime }}{\frac{1}{g(z)}}=e^{-i\lambda }\frac{
		zg^{\prime }(z)}{g(z)}, \ \  (z\in \mathbb{B}).
\end{equation*}%
In the case of $\mathcal{B}_{K,\vartheta }^{a}f(z)\in  \mathcal{M}
_{S}(\lambda ),$ it is obvious that $f(z)\in S_{K,\vartheta }^{\lambda }
\left[ 1-2\alpha ,-1\right] .$ We need to show that the class $\mathcal{M}
_{S}(\lambda )$ is not empty. The function 
\begin{equation*}
	g(z)=\frac{z}{\left( 1-z\right) ^{2(\alpha -1)e^{-i\lambda }\cos \lambda }}%
\end{equation*}
plays an extremal role for the class $S_{p}^{\lambda }(\alpha )$. Therefore, 
\begin{equation*}
	f(z)=\frac{1}{g(z)}=\frac{\left( 1-z\right) ^{2\tau }}{z},\text{ }\tau
	=(\alpha -1)e^{-i\lambda }\cos \lambda 
\end{equation*}
is in the class $ \mathcal{M}_{S}(\lambda ).$ We can generalize this
result to the class $S_{K,\vartheta }^{\lambda }\left[ 1-2\alpha ,-1\right] .
$ Hence, the class $S_{K,\vartheta }^{\lambda }( \Theta )$ is not empty, since $
\mathcal{M}_{S}(\lambda )\subset S_{K,\vartheta }^{\lambda }( \Theta )$.

\begin{remark}
	We remark the following special cases:\\
	\textit{(i)} For $\Theta(z) = \frac{1 + Az}{1 + Bz} \quad (-1 \leq B < A \leq 1)$, we set
	\begin{equation*} 
		S_{K,\vartheta }^{\lambda }( \Theta)=: S^{\lambda}_{K, \vartheta} ( A,B) \ \ \text{and} \ \ K_{K,\vartheta }^{\lambda }( \Theta )=:K^{\lambda}_{K, \vartheta} ( A,B).
	\end{equation*}
	\textit{(ii)} For $A=1 $ and $B=-1$ with $0 \leq \alpha < 1$, we arrive
	\begin{equation*} 
		S^{\lambda}_{K, \vartheta} ( A,B)=:S^{\lambda}_{K, \vartheta} [1 - 2\alpha, -1] \ \ \text{and} \ \ K^{\lambda}_{K, \vartheta} ( A,B)=:K^{\lambda}_{K, \vartheta} [1 - 2\alpha, -1].
	\end{equation*}
	\textit{(iii)} For $\lambda=0$, we set
	\begin{equation*} 
		S^{\lambda}_{K, \vartheta} [1 - 2\alpha, -1]=:S_{K, \vartheta} [1 - 2\alpha, -1] \ \ \text{and} \ \ 	K^{\lambda}_{K, \vartheta} [1 - 2\alpha, -1]=:K_{K, \vartheta} [1 - 2\alpha, -1].
	\end{equation*}
\end{remark}

\section*{3. Convolution Properties}

This section explores the key properties of convolution, including the necessary conditions for the function $\mathcal{B}_{K,\vartheta}^a \left( f\right)$ to reside in specific functional classes.

\begin{theorem} \label{23}
	Let $ \Theta$ be an analytic function in $\mathbb{B}$ and be defined on $\partial\mathbb{B}=\left\{ z\in \mathbb{C}:  \left\vert z\right\vert =1 \right\}$. The function $\mathcal{B}_{K,\vartheta}^a \left( f\right)$ is in  $	S^{\lambda}_{K, \vartheta} ( \Theta)$ if and only if
	\begin{equation} \label{24}
		\left( \mathcal{B}_{K,\vartheta}^a f\left( z\right)\right)  \ast \frac{1-\varepsilon z}{z\left( 1-z\right) ^2}   \neq 0,   \ \ \left(z \in \mathbb{B^*} \right) 
	\end{equation}
	where
	\begin{equation*}
		\varepsilon=\frac{2-e^{-i\lambda }\left( \cos \lambda  \Theta \left( x\right)+ i \sin \lambda \right)}{1-e^{-i\lambda }\left( \cos \lambda  \Theta \left( x\right)+ i \sin \lambda \right)},   \ \ \left( \left| x\right| =1,  \left\vert \lambda \right\vert <\frac{\pi }{2} \right) .
	\end{equation*}
\end{theorem}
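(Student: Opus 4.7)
The plan is to follow the classical Silverman–Silvia style convolution characterization. Write $F(z) := \mathcal{B}_{K,\vartheta}^a f(z)$ and set $q(z) := -e^{-i\lambda}(\cos\lambda\,\Theta(z) + i\sin\lambda)$. The first step is to recast the defining subordination of $S^{\lambda}_{K,\vartheta}(\Theta)$ as a nonvanishing statement: the subordination $\tfrac{zF'(z)}{F(z)} \prec q(z)$ holds if and only if for every $z\in\mathbb{B}^{*}$ and every $x\in\partial\mathbb{B}$,
\begin{equation*}
\frac{zF'(z)}{F(z)} \neq q(x),\qquad\text{equivalently,}\qquad zF'(z) - q(x)\,F(z) \neq 0,
\end{equation*}
which is why the hypothesis that $\Theta$ is analytic in $\mathbb{B}$ and defined on $\partial\mathbb{B}$ matters (so that $q(x)$ furnishes the boundary curve of the subordinant).

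Next, I would translate this difference into a convolution. Using the series $F(z)=\tfrac{1}{z}+\sum_{n\ge 1} h_n^a(K,\vartheta,s)\,\mathbf{a}_n z^{n-1}$ from \eqref{2}, direct differentiation gives
\begin{equation*}
zF'(z) - q(x)F(z) = -\frac{1+q(x)}{z} + \sum_{n=1}^{\infty}\bigl[(n-1) - q(x)\bigr]\, h_n^a(K,\vartheta,s)\,\mathbf{a}_n z^{n-1}.
\end{equation*}
On the other hand, expanding $\tfrac{1-\varepsilon z}{z(1-z)^{2}} = \tfrac{1}{z} + \sum_{n\ge 1}\bigl[(n+1)-\varepsilon n\bigr] z^{n-1}$ and using the definition of Hadamard product yields
\begin{equation*}
\bigl(F \ast \tfrac{1-\varepsilon z}{z(1-z)^{2}}\bigr)(z) = \frac{1}{z} + \sum_{n=1}^{\infty}\bigl[(n+1)-\varepsilon n\bigr] h_n^a(K,\vartheta,s)\,\mathbf{a}_n z^{n-1}.
\end{equation*}
The key coefficient check is that, for $\varepsilon = \tfrac{2+q(x)}{1+q(x)}$ (which is exactly the $\varepsilon$ in the statement after substituting $q(x)$),
\begin{equation*}
(1+q(x))\bigl[(n+1)-\varepsilon n\bigr] = (1+q(x)) - n = -\bigl[(n-1)-q(x)\bigr],
\end{equation*}
so comparing term by term (including the $z^{-1}$ coefficient) gives the clean identity
\begin{equation*}
zF'(z) - q(x)F(z) = -\bigl(1+q(x)\bigr)\,\bigl(F \ast \tfrac{1-\varepsilon z}{z(1-z)^{2}}\bigr)(z).
\end{equation*}

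Finally, since $1+q(x)\neq 0$ on $\partial\mathbb{B}$ (otherwise $\varepsilon$ itself would not be defined, and this case corresponds precisely to $\Theta(x)=1$, which is excluded on the unit circle), this identity shows that the condition $zF'(z) - q(x)F(z)\neq 0$ for all admissible $z,x$ is equivalent to \eqref{24}. Combined with the subordination-to-nonvanishing reformulation of Step~1, this gives the claimed equivalence. The main obstacle I expect is the careful bookkeeping in Step~2 -- matching the $z^{-1}$ term as well as the generic coefficient, and verifying that the algebraic formula for $\varepsilon$ is exactly the one producing the proportionality factor $-(1+q(x))$; once that computation is carried out, the rest of the argument reduces to the well-known principle that subordination to a boundary-continuous $q$ is equivalent to avoiding its boundary values.
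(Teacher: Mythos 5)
Your proposal is correct and follows essentially the same route as the paper: both reduce the subordination defining $S^{\lambda}_{K,\vartheta}(\Theta)$ to the nonvanishing of $zF'(z)-q(x)F(z)$ and then identify this expression, up to the nonzero factor $-(1+q(x))$, with the convolution of $F$ against $\frac{1-\varepsilon z}{z(1-z)^2}$. The only (cosmetic) difference is that you verify the identity by direct coefficient comparison, whereas the paper manipulates the kernels $\frac{1}{z(1-z)}$ and $\frac{1}{z(1-z)^2}$; your explicit remark that $1+q(x)\neq 0$ exactly when $\Theta(x)\neq 1$ is a point the paper leaves implicit.
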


\begin{proof}
	The equation (\ref{12}) exhibits that $\mathcal{B}_{K,\vartheta}^a \left( f\right) \in S^{\lambda}_{K, \vartheta} (  \Theta)$ if and only if
	\begin{eqnarray*}
		\frac{z\left(\mathcal{B}_{K,\vartheta}^a f(z) \right)^{\prime}}{ \mathcal{B}_{K,\vartheta}^a f(z)}  \neq  -e^{-i\lambda}  \left( \cos \lambda  \Theta \left( z\right)+ i \sin \lambda \right), \ \ \left(z \in \mathbb{B^*}, \left| x\right| =1,  \left\vert \lambda \right\vert <\frac{\pi }{2} \right)
	\end{eqnarray*}
	or, equivalently
	\begin{eqnarray} \label{kk}
		z\left(\mathcal{B}_{K,\vartheta}^a f(z) \right)^\prime + e^{-i\lambda}\left( \cos \lambda  \Theta \left( z\right)+ i \sin \lambda \right)\mathcal{B}_{K,\vartheta}^a f(z) \neq 0.
	\end{eqnarray}
	For $ f \in \Sigma $ given by (\ref{1}), we have
	\begin{equation}\label{kkk}
		\mathcal{B}_{K,\vartheta}^a f(z) = f(z) \ast \frac{1}{z^(1 - z)} \ast h(z)
	\end{equation}
	and
	\begin{equation}\label{kkkk}
		z\left( \mathcal{B}_{K,\vartheta}^a f(z)\right) ^ \prime = f(z) \ast \left( \frac{1}{z(1 - z)^2} - \frac{2}{z(1 - z)} \right) \ast h(z),
	\end{equation}		
	where $ h(z)=\mathcal{E}_{K,\vartheta}^a (z;s)$. By making use of the convolutions (\ref{kkkk}) and (\ref{kkk}) in (\ref{kk}), for $z \in \mathbb{B^*}, \ |x| = 1$ and $ |\lambda| < \frac{\pi}{2}$ we have
	\begin{eqnarray*}
		f(z) \ast \left( \frac{1}{z(1 - z)^2} - \frac{2}{z(1 - z)} + \frac{e^{-i\lambda}\left( \cos \lambda  \Theta \left( z\right)+ i \sin \lambda \right)}{z(1 - z)} \right) \ast h(z) \neq 0 
	\end{eqnarray*}
	or
	\begin{multline*}
		f(z) \ast \left[ \frac{\left( -1+ e^{-i\lambda}[\cos \lambda  \Theta (x) + i \sin \lambda] \right)+\left( 2- e^{-i\lambda}[\cos \lambda  \Theta (x) + i \sin \lambda] \right)}{z(1 - z)^2} \right] \\ \ast h(z) \neq 0,
	\end{multline*}	
	which yields the desired convolution condition (\ref{23}) of Theorem \ref{23}.
	
\end{proof}

\begin{theorem} \label{teo}
	Let $ \Theta$ be an analytic function in $\mathbb{B}$ and be defined on $\partial\mathbb{B}=\left\{ z\in \mathbb{C}:  \left\vert z\right\vert =1 \right\}$. The function $ f \in \Sigma$ is in  $ S^{\lambda}_{K, \vartheta} (  \Theta)$ if and only if
	\begin{equation} 
		f\left( z\right) \ast \left\lbrace \frac{1}{z} 
		+\sum_{n=2}^\infty(n-\epsilon)\frac{a^s \Gamma(\vartheta)}{\Gamma(Kn+\vartheta-K)(n+a-1)^s}z^{n-2} \right\rbrace \neq 0,
	\end{equation}
	where
	\begin{equation*}
		\epsilon=2-e^{-i\lambda }\left( \cos \lambda  \Theta \left( x\right)+ i \sin \lambda \right).
	\end{equation*}
\end{theorem}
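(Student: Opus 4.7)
The plan is to deduce Theorem \ref{teo} from Theorem \ref{23} by transferring the Hadamard convolution off the operator $\mathcal{B}^{a}_{K,\vartheta}f$ and onto $f$ itself. Since $\mathcal{B}^{a}_{K,\vartheta}f = f\ast \mathcal{E}^{a}_{K,\vartheta}$ and the Hadamard product is commutative and associative, the non-vanishing condition furnished by Theorem \ref{23} can be rewritten as
\begin{equation*}
f(z)\ast\left[\,\mathcal{E}^{a}_{K,\vartheta}(z;s)\ast\frac{1-\varepsilon z}{z(1-z)^{2}}\,\right]\neq 0,
\end{equation*}
so that the whole task reduces to computing the inner convolution in closed form.

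The second step is a routine series expansion. Using $(1-z)^{-2}=\sum_{n\geq 0}(n+1)z^{n}$ one obtains
\begin{equation*}
\frac{1-\varepsilon z}{z(1-z)^{2}}=\frac{1}{z}+\sum_{n=1}^{\infty}\bigl[(n+1)-n\varepsilon\bigr]z^{n-1},
\end{equation*}
and since the Hadamard product acts coefficient-wise on the Laurent series of $\mathcal{E}^{a}_{K,\vartheta}(z;s)=\frac{1}{z}+\sum_{n=1}^{\infty}h_{n}^{a}(K,\vartheta,s)\,z^{n-1}$, the inner convolution becomes
\begin{equation*}
\mathcal{E}^{a}_{K,\vartheta}(z;s)\ast\frac{1-\varepsilon z}{z(1-z)^{2}}=\frac{1}{z}+\sum_{n=1}^{\infty}\bigl[(n+1)-n\varepsilon\bigr]\,h_{n}^{a}(K,\vartheta,s)\,z^{n-1}.
\end{equation*}

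The third step is to translate the parameter $\varepsilon$ used in Theorem \ref{23} into the parameter $\epsilon$ appearing in the statement. Setting $A=e^{-i\lambda}(\cos\lambda\,\Theta(x)+i\sin\lambda)$, the two definitions give $\varepsilon=\tfrac{2-A}{1-A}$ and $\epsilon=2-A$, whence
\begin{equation*}
(n+1)-n\varepsilon=\frac{(n+1)(1-A)-n(2-A)}{1-A}=\frac{-(n-1+A)}{1-A}=\frac{\epsilon-(n+1)}{1-A}.
\end{equation*}
Multiplying the non-vanishing relation by the nonzero constant $-(1-A)$ (note $A\neq 1$, since the subordination target takes values in $\mathbb{B}$), the inner coefficients become $(n+1-\epsilon)\,h_{n}^{a}(K,\vartheta,s)$; a final reindexing $m=n+1$ recasts the kernel into the form displayed in the statement, with the exponent shift absorbed into the index of $h_{n}^{a}$.

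The main obstacle I anticipate is not conceptual but bookkeeping: one must track carefully which multiplicative factors are nonzero so that the condition "$\neq 0$" is preserved through each rescaling, and perform the final index shift consistently with the definition of $h_{n}^{a}$. The converse implication comes for free, because every manipulation---series expansion, Hadamard product with $\mathcal{E}^{a}_{K,\vartheta}$, and rescaling by nonzero constants---is reversible, so the equivalence in Theorem \ref{teo} follows directly from the equivalence already established in Theorem \ref{23}.
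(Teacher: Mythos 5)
Your proposal is correct and follows essentially the same route as the paper: both deduce the result from Theorem \ref{23} by absorbing $\mathcal{E}^a_{K,\vartheta}$ into the kernel via associativity of $\ast$ and expanding $\frac{1-\varepsilon z}{z(1-z)^2}$ (equivalently $\frac{1}{z(1-z)^2}-\frac{\epsilon}{z(1-z)}$) as a Laurent series, the only cosmetic difference being that you start from the normalized kernel of Theorem \ref{23} and rescale by the nonzero constant $-(1-A)$ to pass from $\varepsilon$ to $\epsilon$, whereas the paper works with the unnormalized kernel from the outset. Be aware that your final reindexing actually produces the leading term $\frac{1-\epsilon}{z}$ and the coefficient $h^a_{n-1}(K,\vartheta,s)$ rather than $\frac{1}{z}$ and $h^a_n(K,\vartheta,s)$ as printed in the theorem, but this mismatch is inherited from the paper's own statement and proof, not a defect of your argument.
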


\begin{proof}
	By using the following relation
	\begin{eqnarray*} 
		f(z) \ast h(z) \ast \left( \frac{1}{z(1-z)^2}- \frac{\epsilon }{z(1-z)}\right) \neq 0,
	\end{eqnarray*}
	and by extensions of  $\frac{1}{z(1-z)^2}$ and $\frac{\epsilon}{z(1-z)}$, we find
	\begin{equation*}
		f(z) \ast h(z) \ast \left\lbrace \frac{1}{z} 
		+\sum_{n=1}^\infty(n-\epsilon)z^{n-2} \right\rbrace  \neq 0
	\end{equation*}
	which means that $ f \in  S^{\lambda}_{K, \vartheta} (  \Theta)$. This completes the
	proof of Theorem \ref{teo}.
\end{proof}

\begin{theorem} \label{teo2}
	The function $f \in \Sigma $ is in  $	K^{\lambda}_{K, \vartheta} ( 
	\Theta)$ if and only if
	\begin{equation} 
		f(z) \ast \left\lbrace -\frac{1}{z} 
		+\sum_{n=2}^\infty(n-\epsilon)\frac{a^s \Gamma(\vartheta)(n-2)}{\Gamma(Kn+\vartheta-K)(n+a-1)^s}\zeta^{n-2} \right\rbrace \neq 0,
	\end{equation}
	where
	\begin{equation*}
		\epsilon=2-e^{-i\lambda }\left( \cos \lambda  \Theta \left( x\right)+ i \sin \lambda \right).
	\end{equation*}
\end{theorem}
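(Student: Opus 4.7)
The most economical route is to deduce Theorem \ref{teo2} from Theorem \ref{teo} by means of the Alexander-type equivalence (\ref{144}), which asserts that $f \in K^{\lambda}_{K,\vartheta}(\Theta)$ if and only if $F(z) := -z f'(z) \in S^{\lambda}_{K,\vartheta}(\Theta)$. This reduces the characterization of the convex class to the spirallike case already in hand.

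First I would verify that $F \in \Sigma$ by reading off its Laurent expansion directly from $f(z) = \frac{1}{z} + \sum_{n\ge 1} \mathbf{a}_n z^{n-1}$; termwise differentiation followed by multiplication by $-z$ gives
\begin{equation*}
F(z) \;=\; \frac{1}{z} \;+\; \sum_{n\ge 1} (1-n)\,\mathbf{a}_n\, z^{n-1},
\end{equation*}
so the pole and normalization are correct. Applying Theorem \ref{teo} to $F$ then yields the non-vanishing condition
\begin{equation*}
F(z) \ast \left\{\, \frac{1}{z} + \sum_{n\ge 2} (n-\epsilon)\, h_n^a(K,\vartheta,s)\, z^{n-2} \right\} \neq 0,
\end{equation*}
with the very same $\epsilon$ that appears in the statement of Theorem \ref{teo2}.

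The remaining task is to rewrite this convolution as a Hadamard product of $f$ itself against the kernel in Theorem \ref{teo2}. Viewing the Hadamard product as coefficient-wise multiplication of Laurent series, the factor $(1-m)$ coming from the $z^{m-1}$-coefficient of $F$ combines with $(m+1-\epsilon)\, h_{m+1}^a(K,\vartheta,s)$ from the kernel. The reindexing $n=m+1$ then turns $(1-m)(m+1-\epsilon)\,h_{m+1}^a(K,\vartheta,s)$ into $-(n-2)(n-\epsilon)\,h_n^a(K,\vartheta,s)$, and pulling the resulting global minus sign onto the $z^{-1}$-coefficient produces exactly the kernel
\begin{equation*}
-\frac{1}{z} + \sum_{n\ge 2}(n-\epsilon)(n-2)\, h_n^a(K,\vartheta,s)\, z^{n-2}
\end{equation*}
of Theorem \ref{teo2}. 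Because non-vanishing is invariant under an overall sign flip, the desired characterization follows.

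The only delicate point is this index-shift bookkeeping: one must correctly track the $z^{-1}$-coefficient where the global minus sign is absorbed, and note that the apparent $n=2$ summand is harmless thanks to the factor $(n-2)$, so no change in the starting index is needed. No new analytic input is required beyond Theorem \ref{teo} and (\ref{144}).
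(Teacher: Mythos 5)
Your argument is correct and follows essentially the same route as the paper: both reduce Theorem \ref{teo2} to Theorem \ref{teo} via the equivalence \eqref{144} and then transfer the differentiation from $-zf'$ onto the kernel. The only difference is that the paper invokes the identity $zf'(z)\ast h(z)=f(z)\ast zh'(z)$ abstractly, whereas you verify the same fact by explicit coefficient bookkeeping and reindexing, which is if anything the more careful presentation of the identical idea.
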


\begin{proof}
	From the fact that
	\begin{equation*}
		f\in K^{\lambda}_{K, \vartheta} ( \Theta) \iff -zf^ \prime \in S^{\lambda}_{K, \vartheta} ( \Theta),
	\end{equation*}
	Theorem \ref{teo2} arrives
	\begin{equation*} \label{ddd}
		zf^\prime(z) \ast \left\lbrace \frac{1}{z} 
		+\sum_{n=2}^\infty(n-\epsilon)\frac{a^s \Gamma(\vartheta)}{\Gamma(Kn+\vartheta-K)(n+a-1)^s}\zeta^{n-2} \right\rbrace \neq 0.
	\end{equation*}
	By the relation $zf^\prime(z) \ast h^\prime(z) =f(z) \ast zh^\prime(z)$, we have the desired result.
\end{proof}

\begin{corollary} 
	The following corollaries establish the convolution properties associated with the specialized classes. For $z \in \mathbb{B^*}$, $\left| x\right| =1$ and $\left\vert \lambda \right\vert <\frac{\pi }{2} $\\
	\textit{(i)}  
	The function $\mathcal{B}_{K,\vartheta}^a \left( f\left(z\right)\right)$ is in  $	S^{\lambda}_{K,\vartheta} (A,B)$ if and only if
	\begin{equation*} \label{24}
		\left( \mathcal{B}_{K,\vartheta}^a f\left( z\right)\right)  \ast \frac{1-\varepsilon z}{z\left( 1-z\right) ^2}   \neq 0, 
	\end{equation*}
	where
	\begin{equation*}
		\varepsilon=\frac{1-e^{-i\lambda }\left( A\cos \lambda + i B \sin \lambda \right)x}{e^{-i\lambda }\left( A\cos \lambda + i B \sin \lambda \right)x}.
	\end{equation*}
	\textit{(ii)}  The function $\mathcal{B}_{K,\vartheta}^a \left( f\left(z\right) \right)$ is in  $ S^{\lambda}_{K, \vartheta} [1 - 2\alpha, -1] $ if and only if
	\begin{equation*} \label{24}
		\left( \mathcal{B}_{K,\vartheta}^a f\left( z\right)\right)  \ast \frac{1-\varepsilon z}{z\left( 1-z\right) ^2}   \neq 0, 
	\end{equation*}
	where
	\begin{equation*}
		\varepsilon=\frac{1-e^{-i\lambda }\left( (1 - 2\alpha)\cos \lambda - i  \sin \lambda \right)x}{e^{-i\lambda }\left( (1 - 2\alpha)\cos \lambda - i \sin \lambda \right)x}.
	\end{equation*}
	\textit{(iii)}  The function $\mathcal{B}_{K,\vartheta}^a \left( f\left(z\right) \right)$ is in $ S_{K, \vartheta} [1 - 2\alpha, -1] $ if and only if
	\begin{equation*} \label{24}
		\left( \mathcal{B}_{K,\vartheta}^a f\left( z\right)\right)  \ast \frac{1-\frac{2 \alpha}{\left(1-2\alpha\right) } z}{z\left( 1-z\right) ^2}   \neq 0.
	\end{equation*}
\end{corollary}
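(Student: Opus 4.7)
The plan is to obtain parts (i), (ii), (iii) as successive specializations of Theorem~\ref{23}. In each case the task reduces to substituting the specific choice of $\Theta$ (and, in (iii), of $\lambda$) into the formula
\[
\varepsilon=\frac{2-e^{-i\lambda}\bigl(\cos\lambda\,\Theta(x)+i\sin\lambda\bigr)}{1-e^{-i\lambda}\bigl(\cos\lambda\,\Theta(x)+i\sin\lambda\bigr)}
\]
and simplifying the resulting expression to match the displayed closed form.

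For part (i) I would substitute $\Theta(z)=\tfrac{1+Az}{1+Bz}$. The key algebraic step is to combine $\cos\lambda\,\Theta(x)+i\sin\lambda$ over the common denominator $1+Bx$ and then use the identity $\cos\lambda+i\sin\lambda=e^{i\lambda}$ to rewrite
\[
C:=e^{-i\lambda}\bigl(\cos\lambda\,\Theta(x)+i\sin\lambda\bigr)=\frac{1+e^{-i\lambda}(A\cos\lambda+iB\sin\lambda)\,x}{1+Bx}.
\]
Feeding this into $\varepsilon=(2-C)/(1-C)$ and cancelling the common factor $(1+Bx)$ in numerator and denominator yields a quotient linear in $x$, which after rearrangement is the form claimed in part (i).

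For part (ii) I would start from the formula produced in (i) and specialize $A=1-2\alpha$, $B=-1$, observing that $iB\sin\lambda=-i\sin\lambda$; this is a mechanical substitution requiring no new idea. For part (iii) I would further impose $\lambda=0$, so that $e^{\pm i\lambda}=1$ and the $i\sin\lambda$ contribution vanishes entirely; the remaining Möbius expression in $x$ then collapses to the single number $\varepsilon=\frac{2\alpha}{1-2\alpha}$ that appears in the statement, yielding exactly the displayed convolution kernel.

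The main obstacle is the algebraic bookkeeping in part (i): keeping track of the $e^{\pm i\lambda}$ factors, invoking $\cos\lambda+i\sin\lambda=e^{i\lambda}$ at the right moment to enable cancellation, and arranging the resulting linear-in-$x$ expression into the exact form stated in the corollary. Once part (i) is verified, parts (ii) and (iii) follow by direct substitution with no further ideas needed, and the proof of the corollary is complete by appealing to Theorem~\ref{23} in each of the three specialized settings.
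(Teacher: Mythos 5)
Your overall strategy is the right one --- the paper offers no proof of this corollary, and the only sensible reading is that each part is meant to follow from Theorem~\ref{23} by substituting the particular $\Theta$ --- but the central algebraic claim in your proposal is asserted rather than checked, and it fails. Writing $D=e^{-i\lambda}\left(A\cos\lambda+iB\sin\lambda\right)$, your expression $C=\frac{1+Dx}{1+Bx}$ is correct, but feeding it into $\varepsilon=\frac{2-C}{1-C}$ gives
\begin{equation*}
\varepsilon=\frac{1+\left(2B-D\right)x}{\left(B-D\right)x},
\end{equation*}
since $1-C=\frac{(B-D)x}{1+Bx}$ and $2-C=\frac{1+(2B-D)x}{1+Bx}$. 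This is not the expression $\frac{1-Dx}{Dx}$ displayed in part (i): the numerator differs by the term $2Bx$ and the denominator replaces $B-D$ with $D$ (already for $\lambda=0$, $A=1$, $B=0$ the two formulas differ by a sign). So either the corollary as printed is inconsistent with Theorem~\ref{23}, or some unstated convention intervenes; in either case ``after rearrangement is the form claimed in part (i)'' is precisely the step that has to be exhibited, and as matters stand it cannot be.

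The reduction of (iii) to (ii) has the same problem. Setting $\lambda=0$ in the formula of part (ii) gives $\varepsilon=\frac{1-(1-2\alpha)x}{(1-2\alpha)x}$, which still depends on $x$; it equals the constant $\frac{2\alpha}{1-2\alpha}$ only at $x=1$. So the ``collapse to a single number'' you invoke does not occur merely by taking $\lambda=0$, and part (iii) as displayed (a single convolution condition with a constant $\varepsilon$ rather than a family indexed by $|x|=1$) does not follow from part (ii) by the substitution you describe. To repair the argument you would need either to carry out the substitution explicitly and correct the displayed formulas so that they agree with the output of Theorem~\ref{23}, or to identify and justify the normalization that accounts for the discrepancy; as written, the proposal defers exactly the computation that constitutes the content of the corollary.
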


The convolution properties presented in this section highlight critical conditions under which the function $\mathcal{B}_{K,\vartheta}^a(f)$ belongs to the respective classes $ S^{\lambda}_{K, \vartheta} (A,B) $, $ S^{\lambda}_{K, \vartheta} [1 - 2\alpha, -1] $, and $ S_{K, \vartheta} [1 - 2\alpha, -1] $. These findings not only contribute to a deeper understanding of meromorphic function classes but also pave the way for further research on convolution structures in complex analysis. Future directions include exploring potential generalizations to other function spaces and examining the stability of these convolution properties under varied transformations.

\section*{4. Integral Representation Formula}
In this section, by utilizing the convolution property, the integral representation has been derived for the functions belonging to subfamilies $S^{\lambda}_{K, \vartheta} (  \Theta)$ and $K^{\lambda}_{K, \vartheta} ( \Theta)$ of meromorphic functions, respectively.

\begin{theorem}
	Let $f\in S_{K,\vartheta }^{\lambda }( \Theta ).$ Then 
	\begin{multline}
		f(z)=\left[ z^{-1}\exp \left( -e^{-i\lambda }\int\limits_{0}^{z}\cos \lambda 
		\frac{\left[  \Theta \left(  \mathbf{\varpi }\left( \xi \right) \right) -1\right] }{\xi }d\xi
		\right) \right]   \label{16} \\
		\ast \left[ \frac{1}{z}+\sum_{n=1}^{\infty }h_{n}^{a}(K,\vartheta ,s)z^{n-1}%
		\right] , 
	\end{multline}%
	where $h_{n}^{a}(K,\vartheta ,s)$ is given by $\left( \ref{2}\right) .$
\end{theorem}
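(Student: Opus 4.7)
The plan is to unwrap the subordination that defines $S_{K,\vartheta}^{\lambda}(\Theta)$, reduce it to a first-order logarithmic differential equation for $\mathcal{B}_{K,\vartheta}^a f$, integrate that equation by quadrature, and finally reconstitute $f(z)$ via the Hadamard convolution against $\mathcal{E}_{K,\vartheta}^a(z;s)$.

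Concretely, by the definition of $\prec$ applied to (\ref{12}), there is a Schwarz function $\mathbf{\varpi}$ with $\mathbf{\varpi}(0)=0$ and $|\mathbf{\varpi}(z)|<1$ on $\mathbb{B}$ for which
\begin{equation*}
\frac{z\left(\mathcal{B}_{K,\vartheta}^a f(z)\right)'}{\mathcal{B}_{K,\vartheta}^a f(z)} = -e^{-i\lambda}\bigl(\cos\lambda\,\Theta(\mathbf{\varpi}(z))+i\sin\lambda\bigr).
\end{equation*}
Since $\mathcal{B}_{K,\vartheta}^a f(z)=\tfrac{1}{z}+\sum_{n\ge 1} h_n^a(K,\vartheta,s)\mathbf{a}_n z^{n-1}$ has a simple pole of residue $1$ at the origin, the function $z\mathcal{B}_{K,\vartheta}^a f(z)$ is analytic on $\mathbb{B}$ with value $1$ at $z=0$, so $\log\bigl(z\mathcal{B}_{K,\vartheta}^a f(z)\bigr)$ is well defined near $0$ and vanishes there. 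Adding $1/z$ to both sides of the displayed identity (after dividing by $z$) produces
\begin{equation*}
\frac{d}{dz}\log\bigl(z\mathcal{B}_{K,\vartheta}^a f(z)\bigr) = \frac{1-e^{-i\lambda}\bigl(\cos\lambda\,\Theta(\mathbf{\varpi}(z))+i\sin\lambda\bigr)}{z}.
\end{equation*}

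The pivotal algebraic step is to verify, using $e^{-i\lambda}=\cos\lambda-i\sin\lambda$ together with $\cos^2\lambda+\sin^2\lambda=1$, that the numerator collapses to
\begin{equation*}
1-e^{-i\lambda}\bigl(\cos\lambda\,\Theta(\mathbf{\varpi}(z))+i\sin\lambda\bigr) = -e^{-i\lambda}\cos\lambda\,\bigl[\Theta(\mathbf{\varpi}(z))-1\bigr].
\end{equation*}
With this identity, integrating from $0$ to $z$ (the boundary value is $\log 1=0$) and exponentiating yields
\begin{equation*}
\mathcal{B}_{K,\vartheta}^a f(z) = z^{-1}\exp\left(-e^{-i\lambda}\int_0^z \cos\lambda\,\frac{\Theta(\mathbf{\varpi}(\xi))-1}{\xi}\,d\xi\right).
\end{equation*}
The proof is concluded by invoking the Hadamard convolution structure $\mathcal{B}_{K,\vartheta}^a f(z)=\mathcal{E}_{K,\vartheta}^a(z;s)\ast f(z)$ and recognising the second factor in (\ref{16}) as the series representation of $\mathcal{E}_{K,\vartheta}^a(z;s)$ from (\ref{2}).

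I expect the main obstacle to lie in the algebraic simplification: keeping track of the real and imaginary parts of $e^{-i\lambda}$ so that the numerator collapses exactly to a scalar multiple of $\Theta(\mathbf{\varpi}(z))-1$ — the crucial form that makes the right-hand side logarithmically integrable at the origin. Once that identity is in hand, the quadrature and exponentiation are routine, and the convolution identification in the last step is read directly from the definitions of $\mathcal{B}_{K,\vartheta}^a$ and $\mathcal{E}_{K,\vartheta}^a(z;s)$.
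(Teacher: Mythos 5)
Your proof is correct and takes essentially the same route as the paper's: both unwrap the subordination with a Schwarz function, use the identity $1-e^{-i\lambda}\left(\cos\lambda\,\Theta+i\sin\lambda\right)=-e^{-i\lambda}\cos\lambda\left(\Theta-1\right)$ to recognize the left-hand side as $\frac{d}{dz}\log\left(z\,\mathcal{B}_{K,\vartheta}^{a}f(z)\right)$, then integrate and exponentiate to obtain the closed form for $\mathcal{B}_{K,\vartheta}^{a}f$. The only remark is that the concluding passage from $\mathcal{B}_{K,\vartheta}^{a}f$ back to $f$ via the Hadamard product is stated equally tersely in your write-up and in the paper, so there is nothing further to reconcile.
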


\begin{proof}
	Suppose that  $f\in S_{K,\vartheta }^{\lambda }( \Theta ).$ We obtain from $
	\left( \ref{12}\right) $ 
	\begin{equation*}
		\frac{z\left( \mathcal{B}_{K,\vartheta }^{a}f(z)\right) ^{\prime }}{
			\mathcal{B}_{K,\vartheta }^{a}f(z)}=-e^{-i\lambda }\left( \cos \lambda  \Theta
		\left( \mathbf{\varpi }(z)\right) +i\sin \lambda \right),
	\end{equation*}
	where $\mathbf{\varpi }$ is a Schwarz function. Hence, we obtain following relations 
	\begin{eqnarray*}
		\frac{z\left( \mathcal{B}_{K,\vartheta }^{a}f(z)\right) ^{\prime }}{
			\mathcal{B}_{K,\vartheta }^{a}f(z)} &=&-e^{-i\lambda }\left( \cos \lambda
		\Theta \left( \mathbf{\varpi }(z)-1\right) +e^{i\lambda }\right) , \\
		\frac{z\left( \mathcal{B}_{K,\vartheta }^{a}f(z)\right) ^{\prime }}{
			\mathcal{B}_{K,\vartheta }^{a}f(z)}+1 &=&-e^{-i\lambda }\left[ \cos \lambda
		\Theta \left( \mathbf{\varpi }(z)-1\right) \right] , \\
		\dfrac{\left( \mathcal{B}_{K,\vartheta }^{a}f(z)\right) ^{\prime }}{\mathcal{
				B}_{K,\vartheta }^{a}f(z)}+\frac{1}{z} &=&-e^{-i\lambda }\frac{\left[ \cos
			\lambda  \Theta \left( \mathbf{\varpi }(z)-1\right) \right] }{z}.
	\end{eqnarray*}
	After integrating both sides of last equation, we have 
	\begin{equation}
		\log \left( z\mathcal{B}_{K,\vartheta }^{a}f(z)\right) =-e^{-i\lambda
		}\int\limits_{0}^{z}\cos \lambda \frac{\left[  \Theta \left(  \mathbf{\varpi }\left( \xi \right)
			\right) -1\right] }{\xi }d\xi .  \label{14}
	\end{equation}
	It follows form $\left( \ref{14}\right) ,$ we conclude that 
	\begin{equation}
		\mathcal{B}_{K,\vartheta }^{a}f(z)=z^{-1}\exp \left( -e^{-i\lambda
		}\int\limits_{0}^{z}\cos \lambda \frac{\left[  \Theta \left( \mathbf{\varpi }\left( \xi \right)
			\right) -1\right] }{\xi }d\xi \right) .  \label{15}
	\end{equation}%
	The results in $\left( \ref{16}\right) $ can be directly derived from $%
	\left( \ref{2}\right) $ and $\left( \ref{15}\right) .$
\end{proof}

\begin{theorem}
	Let $f\in K_{K,\vartheta }^{\lambda }( \Theta )$. In this case, $f(z)$ can be represented by the following integral form:
	\begin{multline}
		f(z)=\left[ \int\limits_{0}^{z}-\eta ^{-2}\exp \left( -e^{-i\lambda
		}\int\limits_{0}^{\eta }\cos \lambda \frac{\left[  \Theta \left(  \mathbf{\varpi }\left( \xi
			\right) \right) -1\right] }{\xi }d\xi \right) d\eta \right]   \label{17} \\
		\ast \left[ \frac{1}{z}+\sum_{n=1}^{\infty }h_{n}^{a}(K,\vartheta ,s)z^{n-1}%
		\right], 
	\end{multline}
	where $h_{n}^{a}(K,\vartheta ,s)$ is given by $\left( \ref{2}\right) $.
\end{theorem}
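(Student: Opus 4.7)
The plan is to deduce \eqref{17} from the previous theorem (which treats the class $S^{\lambda}_{K,\vartheta}(\Theta)$) by invoking the Alexander-type equivalence \eqref{144}: $f\in K^{\lambda}_{K,\vartheta}(\Theta)$ if and only if $g(z):=-zf'(z)$ belongs to $S^{\lambda}_{K,\vartheta}(\Theta)$. Once $g$ is described explicitly by the previous result, the formula for $f$ follows by inverting the correspondence $f\mapsto -zf'$ through an integration.

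I would proceed as follows. First, I would record the commutation identity
\begin{equation*}
\mathcal{B}_{K,\vartheta}^{a}\bigl(-zf'(z)\bigr)=-z\bigl(\mathcal{B}_{K,\vartheta}^{a}f(z)\bigr)',
\end{equation*}
which is a direct coefficient check using \eqref{1} and \eqref{2}: with $f(z)=\tfrac{1}{z}+\sum_{n=1}^{\infty}\mathbf{a}_n z^{n-1}$, both sides expand as $\tfrac{1}{z}-\sum_{n=1}^{\infty}(n-1)\,h_{n}^{a}(K,\vartheta,s)\,\mathbf{a}_n z^{n-1}$. Next, applying \eqref{15} to $g$ yields
\begin{equation*}
\mathcal{B}_{K,\vartheta}^{a}g(z)=z^{-1}\exp\!\Bigl(-e^{-i\lambda}\int_{0}^{z}\cos\lambda\,\frac{[\Theta(\mathbf{\varpi}(\xi))-1]}{\xi}\,d\xi\Bigr).
\end{equation*}
Combining these two gives $(\mathcal{B}_{K,\vartheta}^{a}f)'(\eta)=-\eta^{-2}\exp(\cdots)$. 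Integrating from $0$ to $z$ reproduces the first bracketed factor in \eqref{17}, and convolving that factor with $\tfrac{1}{z}+\sum_{n=1}^{\infty}h_{n}^{a}(K,\vartheta,s)z^{n-1}$, exactly as in the closing step of the proof of the preceding theorem, recovers $f$ from $\mathcal{B}_{K,\vartheta}^{a}f$ and yields \eqref{17}.

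The main subtle point is the integration at $\eta=0$: $\mathcal{B}_{K,\vartheta}^{a}f$ carries a simple pole while the integrand $-\eta^{-2}\exp(\cdots)$ has a double pole at the origin, so $\int_{0}^{z}$ must be read formally. Concretely, one splits the integrand into its singular part $-\eta^{-2}$, whose antiderivative $z^{-1}$ matches the principal part of $\mathcal{B}_{K,\vartheta}^{a}f$, and a regular part that is analytic near $0$ because $[\Theta(\mathbf{\varpi}(\xi))-1]/\xi$ is analytic on $\mathbb{B}$; the latter is integrated in the usual sense. With this convention fixed, \eqref{17} is a direct consequence of \eqref{15}, \eqref{144}, and the commutation identity above.
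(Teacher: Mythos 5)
Your proposal is correct and takes essentially the same route as the paper: the paper's own proof is a single sentence invoking the Alexander-type equivalence \eqref{144} together with the preceding integral representation theorem, and your argument is exactly that deduction with the omitted details supplied (the commutation identity $\mathcal{B}_{K,\vartheta}^{a}(-zf')=-z(\mathcal{B}_{K,\vartheta}^{a}f)'$, the application of \eqref{15} to $g=-zf'$, and the convention for the integration at $\eta=0$). Nothing in your write-up departs from the paper's approach; it only makes explicit what the paper leaves implicit.
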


\begin{proof}
	It is possible to obtain relation in $\left( \ref{17}\right) $ by
	considering equation $\left( \ref{144}\right) $ and Theorem \ref{23}.
\end{proof}

\begin{corollary}
	It is possible to determine integral representations for functions belonging to certain subclasses for specific values of the parameters and constraints. Suppose that $h_{n}^{a}(K,\vartheta ,s)$ is defined by $\left( \ref{2}\right)$. \\
	\textit{(i)}  	Let $f\in S_{K,\vartheta }^{\lambda }(A,B).$ Then 
	\begin{multline*}
		f(z)=\left[ z^{-1}\exp \left( -e^{-i\lambda }\int\limits_{0}^{z}\left( A-B \right)\cos \lambda 
		\frac{  \mathbf{\varpi }\left( \xi \right)   }{\xi \left( 1+B \mathbf{\varpi }\left( \xi \right)  \right)}d\xi
		\right) \right]   \\
		\ast \left[ \frac{1}{z}+\sum_{n=1}^{\infty }h_{n}^{a}(K,\vartheta ,s)z^{n-1}
		\right].
	\end{multline*}  
	\textit{(ii)}  
	Let $f\in K_{K,\vartheta }^{\lambda }\left(A,B\right).$ Then
	\begin{multline*}
		f(z)=\left[ \int\limits_{0}^{z}-\eta ^{-2}\exp \left( -e^{-i\lambda
		}\int\limits_{0}^{\eta }\left(A-B\right)\cos \lambda \frac{   \mathbf{\varpi }\left( \xi \right)   }{\xi \left( 1+B \mathbf{\varpi }\left( \xi \right)  \right)}d\xi \right) d\eta \right] \\
		\ast \left[ \frac{1}{z}+\sum_{n=1}^{\infty }h_{n}^{a}(K,\vartheta ,s)z^{n-1}
		\right]. 
	\end{multline*}  
	\textit{(iii)}  
	Let $f\in S_{K,\vartheta }^{\lambda }  [1 - 2\alpha, -1]$. Then 
	\begin{multline*}
		f(z)=\left[ z^{-1}\exp \left( -e^{-i\lambda }\int\limits_{0}^{z}2\left(1-\alpha\right)\cos \lambda 
		\frac{  \mathbf{\varpi }\left( \xi \right)   }{\xi \left( 1- \mathbf{\varpi }\left( \xi \right)  \right)}d\xi
		\right) \right]   \\
		\ast \left[ \frac{1}{z}+\sum_{n=1}^{\infty }h_{n}^{a}(K,\vartheta ,s)z^{n-1}%
		\right].
	\end{multline*} 
	\textit{(iv)}  
	Let $f\in K_{K,\vartheta }^{\lambda } [1 - 2\alpha, -1]$. Then 
	\begin{multline*}
		f(z)=\left[ \int\limits_{0}^{z}-\eta ^{-2}\exp \left( -e^{-i\lambda
		}\int\limits_{0}^{\eta }2\left(1-\alpha\right)\cos \lambda 
		\frac{   \mathbf{\varpi }\left( \xi \right)   }{\xi \left( 1- \mathbf{\varpi }\left( \xi \right)  \right)}d\xi \right) d\eta \right] \\
		\ast \left[ \frac{1}{z}+\sum_{n=1}^{\infty }h_{n}^{a}(K,\vartheta ,s)z^{n-1}%
		\right]. 
	\end{multline*}  
\end{corollary}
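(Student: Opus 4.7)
The plan is to obtain all four parts as direct specializations of the two integral representation theorems established above, namely the formulas \eqref{16} and \eqref{17}. The only work to do is to compute the quantity $\Theta(\varpi(\xi))-1$ explicitly for each choice of $\Theta$ recorded in the special-case remark after \eqref{13}, and then substitute that expression into the inner integrand.

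For parts (i) and (iii), I would start from formula \eqref{16}. In case (i) we take $\Theta(z)=\frac{1+Az}{1+Bz}$, so that
\begin{equation*}
\Theta(\varpi(\xi))-1=\frac{1+A\varpi(\xi)}{1+B\varpi(\xi)}-1=\frac{(A-B)\varpi(\xi)}{1+B\varpi(\xi)}.
\end{equation*}
Plugging this into the inner integral of \eqref{16} produces precisely the exponential factor displayed in (i). Case (iii) is obtained from (i) by the further specialization $A=1-2\alpha$, $B=-1$, which yields $A-B=2(1-\alpha)$ and $1+B\varpi(\xi)=1-\varpi(\xi)$, reproducing the formula in (iii).

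For parts (ii) and (iv), I would start from formula \eqref{17} instead and perform exactly the same two substitutions. Nothing else changes, because the outer structure of the integral representation for the convex class, encoded via the Alexander-type relation \eqref{144}, is independent of the particular $\Theta$ being used.

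There is no real obstacle: the proof is essentially a bookkeeping argument, since the content has already been supplied by the two main integral representation theorems together with the definitions in the remark following \eqref{13}. The only point that deserves mild care is ensuring that the Schwarz function $\varpi$ in each specialization is the same as the one provided by the subordination in \eqref{12} (respectively \eqref{13}), so that the exponentials in (i)--(iv) are well defined on $\mathbb{B}$; this holds automatically because $|\varpi(\xi)|<1$ implies $1+B\varpi(\xi)\neq 0$ for $|B|\leq 1$ and, in particular, $1-\varpi(\xi)\neq 0$ in the $B=-1$ case.
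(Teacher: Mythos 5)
Your proposal is correct and matches the paper's (implicit) argument: the corollary is obtained exactly by substituting $\Theta(z)=\frac{1+Az}{1+Bz}$, and then $A=1-2\alpha$, $B=-1$, into the two integral representation theorems, using $\Theta(\varpi(\xi))-1=\frac{(A-B)\varpi(\xi)}{1+B\varpi(\xi)}$. The paper leaves this routine specialization to the reader, and your computation (including the remark that $1+B\varpi(\xi)\neq 0$ since $|\varpi(\xi)|<1$) fills it in correctly.
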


Since integral representation formula often preserve or reveal geometric properties such as univalence, starlikeness and convexity, it is more suitable to study with the integral representation formula of functions rather than their series expansion. It also allow us the derivation of Taylor or Laurent coefficient bounds, as well as growth and distortion theorems. For this reason, the results obtained in this section may lead to future investigations about meromorphic spirallike functions.

\section*{5. Conclusion}

In this paper, we defined and analyzed two subclasses $S^{\lambda}_{K, \vartheta} ( \Theta)$ and $K^{\lambda}_{K,\vartheta} ( \Theta)$ of meromorphic functions, related to the Barnes-Mittag–Leffler function $\mathcal{B}_{K,\vartheta}^a$, within the
punctured symmetric domain $\mathbb{B^*}$. The convolution properties of the operator 
$\mathcal{B}_{K,\vartheta}^a(f)$ are established, and the necessary conditions are determined. Moreover, these convolution properties are applied to derive integral representation formulas for the subfamilies $S^{\lambda}_{K,\vartheta}(\Theta)$ and $K^{\lambda}_{K,\vartheta}(\Theta)$.

The results of this work provide new directions in geometric function theory, in addition to generalizing conclusions already established in meromorphic function theory. There are several possible approaches to further expand this research. One such approach is to use variations of the Mittag–Leffler function that have been thoroughly studied and generalized in the literature. Such generalizations could lead to new subclasses of meromorphic functions and provide a deeper understanding of their geometric and structural behavior.

Thus, this work not only establishes a basis for future theoretical advancements but also encourages further research into more extensive generalizations incorporating geometric function theory and special functions.
\section*{Acknowledgements}
The authors are grateful to the referees and editors for their helpful comments and suggestions that enhanced the paper.\\
\textbf{Funding:} This research received no external funding.\\
\textbf{Data Availability Statement:} No data is used in this work.\\
\textbf{Conflicts of Interest:} The authors declare no conflicts of interest.

\vspace{0.5cm}

\end{document}